\documentclass[10pt]{article}
\usepackage{graphicx} 
\usepackage{amsmath}
\usepackage{amssymb}
\usepackage{amsthm}
\usepackage{mathtools}
\usepackage{color}
\usepackage{hyperref}
\usepackage{authblk}
\usepackage[paper=a4paper,total={19cm, 26cm}]{geometry}
\usepackage{faktor}
\usepackage{framed}
\newtheorem*{remark}{Remark}
\newtheorem*{problem}{Problem}

\newtheorem*{proposition}{Proposition}
\newtheorem*{definition}{Definition}

\newcommand{\Pzero}{$\mathrm{P0}$}
\newcommand{\Pone}{$\mathrm{P1}$}
\newcommand{\PoneA}{$\mathrm{P1}_\mathrm{A}$}
\newcommand{\PoneB}{$\mathrm{P1}_\mathrm{B}$}
\newcommand{\PoneC}{$\mathrm{P1}_\mathrm{C}$}
\newcommand{\Ptwo}{$\mathrm{P2}$}
\newcommand{\Ptwoh}{$\mathrm{P2}^h$}

\title{A regularized method for quadratic optimization problems\\ with finite-dimensional
degeneracy}

\author[1]{\small Cristian Guillermo Gebhardt\thanks{Corresponding author.\textit{E-mail-address}:\href{mailto:cristian.gebhardt@uib.no}{cristian.gebhardt@uib.no}}}
\author[2,3]{\small Ignacio Romero}

\affil[1]{\small Geophysical Institute and Bergen Offshore Wind Centre (BOW), University of Bergen, Allégaten 70, 5007 Bergen, 
Norway}
\affil[2]{\small Deparment of Mechanical Engineering, Universidad Politécnica de Madrid, Jos\'{e} Guti\'{e}rrez Abascal 2, 28006 Madrid, Spain}
\affil[3]{IMDEA Materials Institute, Eric Kandel 2, 28096 Getafe, Spain}

\date{}

\begin{document}

\maketitle
\vspace{-12mm}
\begin{abstract}
\noindent We propose and analyze a perturbative regularization method to approximate quadratic optimization problems with finite‑dimensional degeneracy. The original problem is first approximated by a regularized problem depending on a small positive parameter, and then discretized using the finite element method. The resulting families of continuous and discrete functionals $\Gamma$‑converge to the functional of the original problem and the corresponding minimizers converge as well. Our method generalizes the approach proposed in \cite{Kaleem2026} for numerically approximating pure Neumann problems, which represents the cornerstone of a sparsity-preserving, numerically efficient alternative to the methods developed in \cite{Bochev2005}, \cite{Ivanov2019} and \cite{Roccia2024}.

\vspace{2mm}
\noindent \textbf{MSC(2000):}
35J50;
49K20;
65N30.

\vspace{2mm}
\noindent \textbf{Keywords:}
Quadratic optimization problems;
finite-dimensional degeneracy;
perturbative regularization method;
continuous and discrete approximations;
variational consistency.
\end{abstract}

\section{Introduction}

In this letter, we study certain approximations intended to regularize quadratic optimization problems with finite-dimensional degeneracy. To this end, let us first introduce the Hilbert spaces $L$ and $H$ such that the triplet $(H, L, H')$ -- where $H'$ denotes the topological dual space of $H$ -- is rigged, i.e., $H \subset L \subset H'$, and $L \cong L'$ acts as the pivot through its identification with its dual. For the space $L$, we denote the inner product by $(\cdot,\cdot)_L$ and the norm by $\|\cdot\|_L$. 
In addition, let $K \subset H$ be the kernel of the bilinear form of the problem so that $H = K \oplus K^\perp$ together with the corresponding projectors $\Pi^\parallel : H \to K$ and $\Pi^\perp : H \to K^\perp$. 
Given $u$ and $v$ in $H$, we say $u \sim v$ (are $K$-equivalent) if $u - v \in K$. We then identify $H/K$ as the quotient space of $H$ modulo $K$. The equivalence class of $u$, denoted by $[u]$, is the set of all $v\in H$ that are $K$-equivalent to $u$. Since $K$ is a closed subspace of $H$, $H/K$ is a normed space with norm
\begin{equation*}
    \| [u] \|_{H/K} = \inf_{v \in [u]\in H/K} \| v \|_{H}.
\end{equation*}
Moreover, the orthogonal projector $\Pi^\perp:H \to K^\perp$ induces a natural isometric isomorphism between the quotient space $H/K$ and $K^\perp$. Specifically, we can identify $K^\perp\cong H/K$ via the map $[u] \in H/K \mapsto \Pi^\perp (u) \in K^\perp$.

\noindent Now, we can proceed with the formal definition of the problem that motivates the present study.

\begin{framed}
\begin{problem}[\Pzero~-- original problem]
Given: \textit{i}) $a(\cdot, \cdot):H\times H\to\mathbb{R}$, a symmetric bilinear form such that $a(u, v) \leq C_a \| u \|_H\| v \|_H ~\forall~u,v \in H$, $a(u, u) \geq c_a \|u\|^2_H ~\forall~u \in H / K$ and $a(u, v) = a(v, u) = 0 ~\forall~u \in K$ and $~\forall~v \in H$; and, \textit{ii}) $\ell:H\to\mathbb{R}$, a linear form such that $\ell(u) \leq \|u \|_H \| \ell \|_{H'}~\forall~u \in H$ and $\ell(u) = 0~\forall~u \in K$. Let
\begin{equation*}
    V(u) = \frac12 a(u, u) - \ell(u)
\end{equation*}
be a functional on $H$. Then, find
\begin{equation*}
    [u] = \arg \left \lbrace \inf_{[\bar{u}] \in H/K} V(\bar{u}) \right \rbrace.
\end{equation*}
\end{problem}
\end{framed}

\noindent The main issue with \Pzero, even when it is well-defined on the quotient space $H/K$, is that it does not provide a unique representative in $H$. Indeed, each equivalence class $[u]\in H/K$ contains infinitely many functions in $H$, differing by arbitrary components in $K$. To select a unique function in $K^\perp$, we need then to impose $\Pi^\parallel(u) = 0$.

\begin{framed}
\begin{proposition}
    Let $[u]\in H/K$ be the solution of \Pzero. The representative $u\in[u]$ satisfying $\Pi^\parallel(u)=0$ is the unique element of the equivalence class $[u]$ with minimal $L$-norm. This representative belongs 
    to~$K^\perp$.
\end{proposition}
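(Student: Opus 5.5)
The plan is to work entirely inside a single equivalence class and reduce the claim to an orthogonal decomposition in $L$. Fix any representative $w\in[u]$. Then $[u]=\{w+k : k\in K\}$, so each element of the class is determined by one parameter $k\in K$. Since $H=K\oplus K^\perp$, we can write $w=\Pi^\parallel(w)+\Pi^\perp(w)$. The condition $\Pi^\parallel(w+k)=0$ then reads $k=-\Pi^\parallel(w)$. This shows that exactly one representative satisfies $\Pi^\parallel(u)=0$, namely $u=\Pi^\perp(w)$. This element lies in $K^\perp$ by construction, which settles the last sentence of the Proposition. It is also independent of the choice of $w$, because $\Pi^\perp$ annihilates $K$.

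For minimality, the orthogonal complement must be taken in a way that is compatible with the $L$ inner product. I will read $K^\perp$ as the complement with respect to $(\cdot,\cdot)_L$, or equivalently use that the splitting $H=K\oplus K^\perp$ is $L$-orthogonal. This is the natural reading, since $L$ is the pivot space and the Proposition speaks of the $L$-norm. For any $v=u+k\in[u]$ with $k\in K$ and $u\in K^\perp$, we have $(u,k)_L=0$, so Pythagoras gives
\begin{equation*}
\|v\|_L^2=\|u\|_L^2+\|k\|_L^2\ \geq\ \|u\|_L^2 .
\end{equation*}
Equality holds if and only if $\|k\|_L=0$. Since $H\subset L$ is an embedding, this forces $k=0$, that is, $v=u$. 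This yields both minimality and uniqueness of the minimizer.

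The only real obstacle is this interpretive point: the norm in which $K^\perp$ is orthogonal must match the norm being minimized. If $K^\perp$ were meant as the $H$-orthogonal complement, Pythagoras in $L$ would fail in general. In that case I would use the fact that $K$ is finite-dimensional. I would then show that the $L$-orthogonal projection onto $K$ is the one implicitly used, or note that for the intended applications (e.g.\ constants in pure Neumann problems with $H=H^1$, $L=L^2$) the $H$- and $L$-complements of $K$ coincide. I would state this assumption explicitly and then carry out the Pythagoras argument above. Closedness of $K$, which is automatic since $K$ is finite-dimensional, guarantees that the projectors exist.
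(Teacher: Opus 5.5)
Your proof is correct and follows essentially the same route as the paper: write a representative as $\Pi^\parallel(v)+\Pi^\perp(v)$ and apply Pythagoras in $L$. Like the paper, you rely on the $L$-orthogonality of $H=K\oplus K^\perp$; the paper assumes this only parenthetically, and you rightly state it explicitly. Your version is also slightly more complete: you construct the representative $\Pi^\perp(w)$ and settle the equality case through injectivity of $H\subset L$, whereas the paper only appeals to ``strict orthogonality''.
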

\end{framed}

\begin{proof}
    Let $[u] \in H/K$ be the solution of \Pzero~ and $u_0\in[u]$ be the representative satisfying $\Pi_K^\parallel(u_0) = 0$ so that $u_0 = \Pi^\perp(u_0)\in K^\perp$. For any other representative of $u \in[u]$, we can write $u = \Pi^\parallel(u)+\Pi^\perp(u)$. Since all element of the equivalence class differ by elements of $K$, we have $\Pi^\perp(u) = \Pi^\perp(u_0) = u_0$. Using the orthogonality of the decomposition $H = K\oplus K^\perp$ (and the fact that the decomposition is also orthogonal in $L$), we obtain
    \begin{equation*}
        \| u \|^2_L = \| \Pi^\parallel(u)\|^2_L + \| \Pi^\perp(u)\|^2_L \geq \| \Pi^\perp(u)\|^2_L = \|u _0\|^2_L.
    \end{equation*}
    taking square roots yields $\|u\|_L \geq \| u_0\|_L$. 
    Uniqueness follows from the strict orthogonality of the decomposition.
\end{proof}
\noindent The reminder is as follows. Section 2 introduces equivalent reformulations of \Pzero. Section 3 presents and analyzes the perturbative $L$-regularization in the continuous setting. Section 4 parallels the previous section, but in the discrete setting.

\section{Equivalent formulations}

Next, we define three well-posed problems that enforce $\Pi^\parallel(u) = 0$ in different ways. The first formulation extends the functional $V$ from $K^\perp$ to $H$ by assigning the value $+\infty$ outside $K^\perp$.

\begin{framed}
\begin{problem}[\PoneA -- extended formulation]
Let 
\begin{equation*}
    I_{K^\perp}(u) :=
    \left\lbrace
    \begin{array}{ll}
     0,    & \text{if}\quad u \in K^\perp\\
     +\infty, & \text{otherwise},
    \end{array}
    \right.
\end{equation*}
be the indicator function of $K^\perp$ and let $u \in H$. Then, find
\begin{equation*}
    u =
    \arg \left \lbrace \inf_{\bar{u} \in H} \left(V(\bar{u}) + I_{K^\perp}(\bar{u})\right)\right \rbrace .
\end{equation*}    
\end{problem}
\end{framed}

\noindent The second formulation enforces the constraint $\Pi^\parallel(u) = 0$ by means of Lagrange multipliers \cite{Bochev2005,Roccia2024}.

\begin{framed}
\begin{problem}[\PoneB~-- saddle-point formulation]
Let 
\begin{equation*}
    V_\lambda(u) := \sup_{\lambda \in L} \left\lbrace V(u) + \left( \lambda, \Pi_K^\parallel (u) \right)_L \right\rbrace
\end{equation*}
be the augmented functional to be minimized, where $u \in H$. Then, find
\begin{equation*}
    u_\lambda = \arg \left \lbrace \inf_{\bar{u} \in H} V_\lambda(\bar{u}) \right \rbrace .
\end{equation*}
\end{problem}
\end{framed}

\noindent The third and last formulation approximately enforces the constraint $\Pi_K^\parallel(u) = 0$ by a penalty term \cite{Ivanov2019,Roccia2024}.

\begin{framed}
    \begin{problem}[\PoneC~-- penalty formulation]
Let $\epsilon>0$ and
\begin{equation*}
    V_\epsilon(u) := V(u) + \frac{1}{2\epsilon}\| \Pi_K^\parallel (u)\|^2_L
\end{equation*}
be the functional to be minimized and $u \in H$. Then, find
\begin{equation*}
    u_\epsilon = \arg \left \lbrace \inf_{\bar{u} \in H} V_\epsilon(\bar{u}) \right \rbrace .
\end{equation*}
\end{problem}    
\end{framed}

\begin{framed}
\begin{proposition}
    \PoneA, \PoneB~ and \PoneC~ are equivalent. Consequently, we shall refer to any of them as $\mathrm{P1}$, emphasizing that their unique solution is precisely the representative of the corresponding equivalence class in $H/K$ with minimal $L$-norm.  
\end{proposition}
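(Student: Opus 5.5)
The plan is to reduce all three problems to one and the same minimization over $K^\perp$, using a single structural fact: $V$ depends only on the $K^\perp$-component of its argument. For $u\in H$ write $u=\Pi^\parallel(u)+\Pi^\perp(u)$. The hypotheses of \Pzero~give $a(w,v)=a(v,w)=0$ for $w\in K$ and $\ell(w)=0$ for $w\in K$. Bilinearity then yields $a(u,u)=a(\Pi^\perp u,\Pi^\perp u)$ and $\ell(u)=\ell(\Pi^\perp u)$, hence
\begin{equation*}
V(u)=V(\Pi^\perp(u))\qquad\forall\,u\in H .
\end{equation*}

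The first step is to show that \PoneA~has a unique solution, namely the minimal-$L$-norm representative. On $K^\perp$, the isometric identification $K^\perp\cong H/K$ from the introduction gives $\|[u]\|_{H/K}=\|\Pi^\perp u\|_H$. So the coercivity assumption $a(u,u)\ge c_a\|u\|_H^2$ on $H/K$ becomes coercivity of $a$ on the closed subspace $K^\perp$, which is itself a Hilbert space. Together with the continuity of $a$ and $\ell$, the Lax--Milgram theorem (equivalently, strict convexity and coercivity of $V$ on $K^\perp$) gives a unique minimizer $u_0\in K^\perp$ of $V$ over $K^\perp$. Since $V+I_{K^\perp}$ equals $+\infty$ off $K^\perp$ and equals $V$ on $K^\perp$, this $u_0$ is the unique solution of \PoneA.

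Next I check that $u_0$ solves \Pzero. By the invariance above, $\inf_{H/K}V=\inf_{K^\perp}V$, so the minimizing class of \Pzero~is $[u_0]$. Its representative with $\Pi^\parallel=0$ is $u_0$, and by the earlier Proposition this is the minimal-$L$-norm element of $[u_0]$.

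For \PoneB, I compute the inner supremum pointwise. If $\Pi^\parallel(u)\neq 0$, take $\lambda=t\,\Pi^\parallel(u)$ with $t\to\infty$; this gives $V_\lambda(u)=+\infty$. If $\Pi^\parallel(u)=0$, the pairing term vanishes and $V_\lambda(u)=V(u)$. Hence $V_\lambda=V+I_{K^\perp}$ identically on $H$, so \PoneB~and \PoneA~have the same unique minimizer.

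For \PoneC, I use the decoupling
\begin{equation*}
V_\epsilon(u)=V(\Pi^\perp u)+\tfrac{1}{2\epsilon}\|\Pi^\parallel u\|_L^2 .
\end{equation*}
Both terms are nonnegative-shift independent, so $V_\epsilon(u)\ge V(\Pi^\perp u)\ge V(u_0)$. Equality in the first inequality forces $\Pi^\parallel u=0$, and equality in the second forces $\Pi^\perp u=u_0$ by uniqueness on $K^\perp$. So $u_\epsilon=u_0$ for every $\epsilon>0$. This is exact and needs no limit $\epsilon\to0$, precisely because $V$ does not see $K$.

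The step needing most care is the coercivity transfer. The hypothesis ``$a(u,u)\ge c_a\|u\|_H^2$ for $u\in H/K$'' has to be read as a statement on $K^\perp$ through the isometry $[u]\mapsto\Pi^\perp u$, and one must confirm that this reading is consistent with $a$ vanishing on $K$. A related point is that $\Pi^\parallel$ must be $L$-orthogonal as well as $H$-orthogonal, as was already assumed in the earlier Proposition. Once this is granted, the remaining arguments are pointwise identities between the functionals.
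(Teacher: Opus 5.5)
Your proof is correct, but it takes a different route from the paper.

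\textbf{The paper's route.} It argues through first-order optimality conditions. It writes the variational equation on $K^\perp$ for \PoneA, the saddle-point system with a multiplier $\lambda\in L$ for \PoneB, and the penalized variational equation for \PoneC. It tests each with $v_K\in K$ to show that the solution lies in $K^\perp$, that $\lambda$ is $L$-orthogonal to $K$, and that $u_\epsilon$ does not depend on $\epsilon$. It then concludes by uniqueness: two solutions $u_1,u_2\in K^\perp$ satisfy $a(u_1-u_2,v)=0$ for all $v\in K^\perp$, hence coincide.

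\textbf{Your route.} You work with the functionals themselves. You use the invariance $V=V\circ\Pi^\perp$, the exact evaluation of the supremum giving $V_\lambda\equiv V+I_{K^\perp}$ on all of $H$, and the splitting $V_\epsilon(u)=V(\Pi^\perp u)+\frac{1}{2\epsilon}\|\Pi^\parallel u\|_L^2$.

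\textbf{What each buys.} Your argument presupposes nothing. Existence and uniqueness come from Lax--Milgram on $K^\perp$. \PoneB~is shown to be literally the same minimization problem as \PoneA, without assuming that a multiplier $\lambda\in L$ exists; the paper's derivation of the saddle-point system tacitly assumes this. You also explicitly identify $u_0$ with the minimizing class of \Pzero, which the paper leaves implicit. The paper's route, in turn, produces the weak forms that are actually discretized, plus the extra information about the multiplier, in the same optimality-condition style used in the later sections.

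The phrase ``nonnegative-shift independent'' in your \PoneC~step is unclear. The inequality only needs the penalty term to be nonnegative, so nothing is lost. Both arguments rely equally on the earlier Proposition for the minimal-$L$-norm claim, and hence on the $L$-orthogonality of the decomposition, as you correctly flag.
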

\end{framed}

\begin{proof}
The problems are equivalents if only if their solutions all lie in $K^\perp$ and simultaneously every solution in $K^\perp$ satisfies the same optimality condition for all problems.
For \PoneB, we need to show that the Lagrange multiplier $\lambda$ is always $L$-orthogonal to the constraint and for \PoneC, we need to show that the solution is independent from $\epsilon$. At the same time, we need to verify that the solutions of \PoneB~ and \PoneC~ identically satisfy the optimality condition required for \PoneA. Below we explicitly verify these points and show that all three formulations lead to the same solution in $K^\perp$.

\noindent By construction, the solution $u \in H$ of \PoneA~ satisfies
\begin{equation*}
    a(u, v) = \ell(v)~ \forall~ v \in K^\perp,
    \quad
    a(u, v_K) = \ell(v_K) = 0~ \forall~ v_K \in K,   
\end{equation*}
which means that $u \in K^\perp$.
The solution $u_\lambda$ of \PoneB~ satisfies
\begin{equation*}
a(u_\lambda, v) + \left(\lambda, \Pi^\parallel(v)\right)_L = \ell(v)~ \forall~ v \in H,
\quad
\left(\Pi^\parallel(u_\lambda), \mu\right)_L =0~ \forall~ \mu \in L.
\end{equation*}
The second condition gives $\Pi^\parallel(u_\lambda) = 0$ and therefore $u_\lambda \in K^\perp$. Now, chose $v = v_K \in K$ in the first equation. Since $\Pi^\parallel(v_K) = v_K$, we obtain
\begin{equation*}
    (\lambda, v_K)_L = 0~\forall~v_K \in K.
\end{equation*}
Thus the constraint term vanishes on $K^\perp$, and the first equation restricted to $K^\perp$ becomes
\begin{equation*}
    a(u_\lambda, v) = \ell (v)~ \forall~ v \in K^\perp.
\end{equation*}
The solution of $u_\epsilon$ of \PoneC~ satisfies
\begin{equation*}
    a(u_\epsilon, v)+\frac{1}{\epsilon}\left(\Pi^\parallel(u_\epsilon), \Pi^\parallel(v)\right)_L = \ell(v)~ \forall~ v \in H. 
\end{equation*}
Taking $v = v_K \in K$, and using that $\Pi^\parallel(v_K) = v_K$, we obtain
\begin{equation*}
    \frac{1}{\epsilon}\left(\Pi^\parallel(u_\epsilon), \Pi^\parallel(v_K)\right)_L = 0 ~\forall~\epsilon > 0,~\forall~v_K\in K.
\end{equation*}
This implies $\Pi^\parallel(u_\epsilon) = 0$, and hence $u_\epsilon \in K^\perp$.
For $u_\epsilon \in K^\perp$, the penalty term vanishes and the optimality condition becomes
\begin{equation*}
    a(u_\epsilon, v) = \ell(v)~ \forall~ v \in K^\perp. 
\end{equation*}
Thus, $u_\epsilon$ is independent from $\epsilon$.
The equivalence is finally established by considering that any two solutions $u_1$ and $u_2$ in $K^\perp$ satisfy
\begin{equation*}
a(u_1, v) - \ell(v) -\left(a(u_2, v) -\ell(v)\right) = 0~ \forall~ v \in K^\perp,
\end{equation*}
which implies
\begin{equation*}
a(u_1-u_2, v) = 0~ \forall~ v \in K^\perp.
\end{equation*}
Then, we conclude $u_1 = u_2$.
\end{proof}

\section{A perturbative $L$-regularization - continuous setting}

To approximate the constrained problem \Pone, we introduce a perturbative regularization in which the constraint is enforced approximately through a $L$-penalty. This leads to the family of unconstrained minimization problems \Ptwo, defined by adding a small, parameter‑dependent stabilization term to the original functional. The regularized problems are well posed for every $\eta>0$, and their solutions converge to the solution of \Pone~as $\eta\downarrow 0$ in a variational sense.

\begin{framed}
\begin{problem}[\Ptwo~-- perturbative regularization] Let  
\begin{equation*}
    V_\eta(u) := V(u) + \frac{\eta}{2}(u, u)_L 
\end{equation*}
be the regularized functional to be minimized, where $u \in H$, $\eta > 0$ is a small parameter, and $\ell(v_K) = 0~ \forall~ v_K \in K$ (consiwtency of $\ell$ for \Pone).
Then, find
\begin{equation*}
    u_\eta = \arg \left \lbrace \inf_{\bar{u} \in H} V_\eta(\bar{u}) \right \rbrace.
\end{equation*}
The minimizer $u_\eta$ equivalently satisfies the optimality condition
\begin{equation*}
    a(u_\eta, v) + \eta(u_\eta, v)_L = \ell(v)~ \forall~v \in H.
\end{equation*}
\end{problem}
\end{framed}

\begin{remark}
It is immediate that \Ptwo~is solvable and has a unique solution, since the regularized functional is strictly convex, continuous, and coercive on $H$ for every $\eta>0$.
\end{remark}

\begin{framed}
\begin{proposition}
    Let $[u] \in H/K$ be the equivalence class of solutions of \Pone, and let $u_\eta$ be the solution of \Ptwo. Then there exists a constant $C>0$, independent of $\eta$ such that
    \begin{equation*}
        \|u_\eta - u_0\|_H \leq C \eta \| \ell \|_{H'},
    \end{equation*}
    where $u_0$ is the element of $[u]$ with minimal $L$-norm, i.e., the solution of \Pone.
\end{proposition}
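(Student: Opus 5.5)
Our plan is to split the regularized optimality condition of \Ptwo{} along the orthogonal decomposition $H = K\oplus K^\perp$, and to show that the $K$-component of $u_\eta$ vanishes. It then remains to compare the $K^\perp$-component with $u_0$ through an error equation that is coercive uniformly in $\eta$. Throughout, we use, as in the proof of the first Proposition, that the decomposition $H=K\oplus K^\perp$ is also $L$-orthogonal. This gives $(\Pi^\parallel(u),\Pi^\perp(v))_L = 0$ and $(\Pi^\perp(u),\Pi^\parallel(v))_L=0$.

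\emph{Step 1: $u_\eta\in K^\perp$.} Take $v=v_K\in K$ in the optimality condition of \Ptwo. Since $a(u_\eta,v_K)=0$ and $\ell(v_K)=0$, we get $\eta(u_\eta,v_K)_L=0$ for all $v_K\in K$. By $L$-orthogonality this reduces to $(\Pi^\parallel(u_\eta),v_K)_L=0$. Choosing $v_K=\Pi^\parallel(u_\eta)$ and using $\eta>0$ gives $\Pi^\parallel(u_\eta)=0$, so $u_\eta\in K^\perp$.

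\emph{Step 2: error equation.} By the preceding Proposition, $u_0\in K^\perp$ satisfies $a(u_0,v)=\ell(v)$ for all $v\in K^\perp$. Subtracting this from the \Ptwo{} condition restricted to $v\in K^\perp$ gives
\begin{equation*}
a(u_\eta-u_0,v)+\eta(u_\eta-u_0,v)_L = -\eta(u_0,v)_L \quad \forall~v\in K^\perp .
\end{equation*}
Set $e=u_\eta-u_0\in K^\perp$ and test with $v=e$. On $K^\perp\cong H/K$ we have $a(e,e)\ge c_a\|e\|_H^2$, and the term $\eta\|e\|_L^2$ is nonnegative, so it can be dropped. The right-hand side is bounded by $\eta\|u_0\|_L\|e\|_L\le \eta C_{HL}\|u_0\|_H\,C_{HL}\|e\|_H$, where $C_{HL}$ is the continuity constant of the embedding $H\subset L$. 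Dividing by $\|e\|_H$ gives $\|e\|_H\le \eta\, C_{HL}^2 c_a^{-1}\|u_0\|_H$.

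\emph{Step 3: stability of $u_0$.} Test $a(u_0,v)=\ell(v)$ with $v=u_0$. This gives $c_a\|u_0\|_H^2\le \|\ell\|_{H'}\|u_0\|_H$, hence $\|u_0\|_H\le c_a^{-1}\|\ell\|_{H'}$. Combining with Step 2 yields the claim with $C=C_{HL}^2/c_a^2$, which is independent of $\eta$.

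The main subtlety is Step 1 together with the interpretation of the coercivity hypothesis. We must use that $K^\perp$, defined via the $H$-orthogonal decomposition, is also $L$-orthogonal to $K$; the paper assumes this implicitly in the first Proposition. We must also read ``$a(u,u)\ge c_a\|u\|_H^2$ on $H/K$'' as coercivity on $K^\perp$. Both are used as stated earlier in the paper. Once they are granted, the rest is a standard Strang/Céa-type perturbation estimate.
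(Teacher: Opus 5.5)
Your proof is correct and follows the paper's argument. Both show $\Pi^\parallel(u_\eta)=0$ by testing with $v_K\in K$, test the difference of the optimality conditions with $e=u_\eta-u_0$ using coercivity on $K^\perp$ and the embedding $H\subset L$, and close with an a priori bound from testing an equation with its own solution. The only difference is cosmetic: you put $\eta(u_0,e)_L$ on the right-hand side and bound $\|u_0\|_H$ from the limit problem, whereas the paper keeps $\eta(u_\eta,e)_L$ and bounds $\|u_\eta\|_H$ from the regularized equation (its displayed error identity also has a harmless sign slip that yours avoids).
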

\end{framed}

\begin{proof}
From the optimality condition of \Ptwo, we have
\begin{equation*}
    a(u_\eta, v_K) + \eta(u_\eta, v_K)_L = \ell(v_K)~ \forall~ v_K \in K.  
\end{equation*}
Since $\ell(v_K)=0$ for all $v_K\in K$, this implies
\begin{equation*}
    (u_\eta, v_K)_L = 0~ \forall~ v_K \in K.  
\end{equation*}
Hence $\Pi^\parallel (u_\eta) = 0$ and therefore $u_\eta \in K^\perp$, i.e., $\Pi_K^\parallel(u_\eta-u_0) = 0$.

\noindent On the one hand, by setting $v = u_\eta -u_0$ and subtracting the optimality condition for \Ptwo~from that of \Pone, we obtain
\begin{equation*}
        a(u_\eta - u_0, u_\eta  - u_0) - \eta(u_\eta, u_\eta - u_0)_L = 0.
\end{equation*}
Then, by invoking the coercivity of $a(\cdot, \cdot)$ on $K^\perp$, we obtain
\begin{equation*}
    \|u_\eta - u_0\|_H \leq C \eta \|u_\eta\|_L.
\end{equation*}

\noindent On the other hand, by considering the optimality condition of \Ptwo~and setting $v = u_\eta$ combined with the continuity of $a(\cdot,\cdot)$ and $\ell(\cdot)$ on $H$, we obtain
\begin{equation*}
    \| u_\eta \|_H^2 \leq C\| \ell \|_{H'} \| u_\eta\|_H. 
\end{equation*}

\noindent By combining both results and using the continuous embedding $\|w\|_L \leq C\|w\|_H$ for all $w \in H$, we conclude
\begin{equation*}
    \| u_\eta -u_0 \|_H \leq C\eta \| \ell \|_{H'}.
\end{equation*}
\end{proof}

\noindent This result establishes the convergence of the solution $u_\eta$ of \Ptwo~to the solution $u_0$ of \Pone~as $\eta \downarrow 0$. However, strong convergence of minimizers does not -- in general -- imply convergence of the corresponding minimum problems. To investigate the asymptotic behavior of the functionals themselves, and to guarantee convergence of minimizers and minimum values in a variational sense, we employ the framework of $\Gamma$-convergence. This analysis is carried out below.   

\begin{framed}
\begin{proposition}
    The family of functionals associated with \Ptwo, denoted $\lbrace V_\eta \rbrace_{\eta\geq 0}$, $\Gamma$-converges to the functional of \Pone~in the strong topology as $\eta\downarrow 0$. Moreover, the corresponding minimum problems are convergent, i.e.,
    $
        \inf_{K^\perp} V_\eta(u_\eta) \to \inf_{K^\perp} V_0(u_0),
    $
    and the minimizers $u_\eta$ converge strongly in $K^\perp$ to the unique minimizer $u_0$ of \Pone.
\end{proposition}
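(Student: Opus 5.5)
The plan is to work on $H$ with its strong topology. The limit functional is $V_0 := V + I_{K^\perp}$, the extended formulation \PoneA, which by the earlier Proposition is equivalent to \Pone. The regularized functionals $V_\eta$ also do not see $K$ in their minimizers: the proof of the previous Proposition shows $u_\eta\in K^\perp$. So it is harmless, and matches the infima written over $K^\perp$ in the statement, to consider the restrictions $V_\eta|_{K^\perp}$ (equivalently $V_\eta + I_{K^\perp}$). With this reading I would verify the two $\Gamma$-convergence inequalities directly, then add equicoercivity, and finally invoke the fundamental theorem of $\Gamma$-convergence.

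\emph{Liminf inequality.} Let $u_\eta\to u$ strongly in $H$. If some $u_\eta\notin K^\perp$ the value is $+\infty$, so assume $u_\eta\in K^\perp$. Since $K^\perp$ is closed, $u\in K^\perp$. Now $V$ is continuous on $H$ by the continuity of $a$ and $\ell$, and $\frac{\eta}{2}\|u_\eta\|_L^2\ge 0$. Hence $\liminf V_\eta(u_\eta)\ge \lim V(u_\eta)=V(u)=V_0(u)$.

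\emph{Limsup inequality.} Take the constant recovery sequence $u_\eta=u$. If $u\notin K^\perp$ there is nothing to prove. Otherwise $V_\eta(u)=V(u)+\frac{\eta}{2}\|u\|_L^2\to V(u)$, using $\|u\|_L\le C\|u\|_H<\infty$. In fact $V_\eta\to V_0$ pointwise and monotonically as $\eta\downarrow0$, with $V_\eta\ge V_0$ on $K^\perp$. This already gives $\Gamma$-convergence to the lower semicontinuous envelope of $V_0$, which is $V_0$ itself.

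\emph{Convergence of minima and minimizers.} I would use equicoercivity. On $K^\perp$ the coercivity of $a$ gives $V_\eta(u)\ge \frac{c_a}{2}\|u\|_H^2-\|\ell\|_{H'}\|u\|_H$ uniformly in $\eta$, so sublevel sets are bounded uniformly in $\eta$. Boundedness only yields weak compactness in infinite dimensions, and this is the main obstacle for strong-topology $\Gamma$-convergence. I would bypass it rather than prove strong compactness. The previous Proposition already gives $\|u_\eta-u_0\|_H\le C\eta\|\ell\|_{H'}$, so the minimizers converge strongly to $u_0$ without any compactness argument. Convergence of minimum values then follows from the liminf inequality applied to this sequence together with $V_\eta(u_\eta)\le V_\eta(u_0)\to V_0(u_0)$:
\[
V_0(u_0)\le\liminf V_\eta(u_\eta)\le\limsup V_\eta(u_\eta)\le \lim V_\eta(u_0)=V_0(u_0).
\]
Alternatively, $|V_\eta(u_\eta)-V_0(u_0)|\le |V(u_\eta)-V(u_0)|+\frac{\eta}{2}\|u_\eta\|_L^2=O(\eta)$, using the a priori bound $\|u_\eta\|_H\le C\|\ell\|_{H'}$. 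Uniqueness of $u_0$ comes from the earlier equivalence Proposition, so the whole family converges and not merely a subsequence.
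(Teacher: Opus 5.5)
Your proof is correct, but it differs from the paper's in the two nontrivial steps.

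\emph{Recovery sequence.} You use the constant sequence $u_\eta=u$, justified by the monotone pointwise convergence $V_\eta\downarrow V_0$ on $K^\perp$. The paper instead defines $u_\eta\in K^\perp$ through the perturbed problem $a(u_\eta,v)+\eta(u_\eta,v)_L=a(u,v)$ for all $v\in K^\perp$, and proves $c_a\|u-u_\eta\|_H^2\le\frac{\eta}{2}\|u\|_L^2$. That is more than is needed here, but it is the template the paper reuses in Section 4, where $u\notin H^h$ and a constant sequence is not admissible.

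\emph{Convergence of minimizers.} The paper proves the liminf inequality along \emph{weakly} convergent sequences, using weak lower semicontinuity of the convex continuous $V$. This is what lets its equicoercivity step (uniformly bounded, hence weakly relatively compact, sublevel sets) feed into the fundamental theorem of $\Gamma$-convergence. You prove the liminf only under strong convergence and correctly note that bounded sublevel sets are not strongly compact. You then import $\|u_\eta-u_0\|_H\le C\eta\|\ell\|_{H'}$ from the preceding proposition and close with $V_0(u_0)\le\liminf V_\eta(u_\eta)\le\limsup V_\eta(u_\eta)\le\lim V_\eta(u_0)$.

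\emph{What each route buys.} Yours is more elementary and explicit about the topology, which the paper passes over. It also gives an $O(\eta)$ rate for the minimum values. The price is that $\Gamma$-convergence plays no role in the convergence of minimizers, which rests entirely on the quantitative estimate derived from the linear optimality system.

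\emph{Making the variational argument self-contained.} Your liminf holds equally along weakly convergent sequences, since $K^\perp$ is weakly closed and $V$ is weakly lower semicontinuous. Weak equicoercivity then gives $u_\eta\rightharpoonup u_0$ and $V_\eta(u_\eta)\to V_0(u_0)$. This forces $a(u_\eta,u_\eta)\to a(u_0,u_0)$, and coercivity of $a$ on $K^\perp$ upgrades weak convergence to strong.
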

\end{framed}

\begin{proof} To establish the convergence of the minimum problems, we need to examine three conditions: \textit{i}) equi-coercivity of the family $\lbrace V _\eta\rbrace_{\eta \geq 0}$; \textit{ii}) the \textit{liminf} inequality; and, \textit{iii}) the existence of a \textit{recovery} sequence. The last two conditions are sufficient to establish the $\Gamma$-convergence of $V_\eta$ to $V_0$, while equi-coercity ensures the convergence of the corresponding minimum problems and their minimizers.
Since the linear form satisfies $\ell(v_K) = 0$ for any $v_K \in K$, we take $K^\perp$ as the natural space within this proof. Nevertheless, \Ptwo~is well defined on the whole space $H$, which is essential for numerical implementations.

\noindent \textit{i}) Equi-coercivity:
Let $a_\eta(\cdot, \cdot)$ be defined as $a(\cdot, \cdot) + \eta (\cdot, \cdot)_L$. Then,  for every $u\in K^\perp$, 
\begin{equation*}
    a_\eta(u, u) = a(u, u)+\eta\|u\|^2_L \geq a(u, u) \geq c_a \|u\|^2_H,
\end{equation*}
and therefore, $a_\eta(\cdot, \cdot)$ is coercive on $K^\perp$ for any $\eta \geq 0$. Since the coercivity constant on $K^\perp$ is independent of $\eta$, every sublevel set $\lbrace u \in K^\perp: V_\eta(u) \leq t\rbrace$ contains a bounded subset of $K^\perp$. Hence the family $\lbrace V_\eta\rbrace_{\eta\geq 0}$ is equi-coercive.

\noindent \textit{ii}) The \textit{liminf} inequality:
Let $u \in K^\perp$ and let $u_\eta \in K^\perp$ be such that $u_\eta \rightharpoonup u \in K^\perp$. Then,
\begin{equation*}
    V_\eta(u_\eta) = V(u_\eta)+\frac{\eta}{2}\|u_\eta\|_L^2\geq V(u_\eta).
\end{equation*}
Given the properties of $a(\cdot,\cdot)$ and $\ell(\cdot)$ defined on $K^\perp$, $V(\cdot) = \frac12 a(\cdot,\cdot)-\ell(\cdot)$ is weakly lower semicontinuous on $K^\perp$, we obtain
\begin{equation*}
    \liminf_{\eta \downarrow 0} V_\eta(u_\eta) \geq \liminf_{\eta \downarrow 0} V(u_\eta) \geq V(u).
\end{equation*}
\textit{iii}) The existence of a \textit{recovery} sequence:
Let $u \in K^\perp$ and define $u_\eta \in K^\perp$ as the unique solution of
\begin{equation*}
    a_\eta(u_\eta,v) = a(u,v)~\forall~v\in K^\perp.
\end{equation*}
Recall that $a_\eta(\cdot,\cdot)$ is coercive on $K^\perp$ with a coercivity constant $c_a$ and continuous on $H$ with a continuity constant $C_a$ and both constants are independent of $\eta$. Now, by taking
$v = u_\eta \in K^\perp$, we can infer the uniform bound $\| u_\eta \|_H \leq C_a/c_a \|u\|_H$. Since bounded sequences in $H$ admit weakly convergent subsequences, and since the limit is unique, the whole sequence satisfies  $u_\eta \rightharpoonup u \in K^\perp$.
To prove that $\lbrace u_\eta\rbrace_{\eta \geq 0}$ is a recovery sequence, choose $v=u_\eta-u \in K^\perp$ and obtain
\begin{equation*}
a(u_\eta-u,u_\eta-u)+\eta\|u_\eta-u\|^2_L = -\eta(u,u_\eta-u)_L,
\end{equation*}
By Young's inequality,
\begin{equation*}
-\eta(u,u_\eta-u)_L \leq \frac{\eta}{2}\|u\|^2_L+\frac{\eta}{2}\|u_\eta-u\|^2_L.
\end{equation*}
Hence
\begin{equation*}
a(u_\eta-u,u_\eta-u)+\eta\|u_\eta-u\|^2_L \leq \frac{\eta}{2}\|u\|^2_L+\frac{\eta}{2}\|u_\eta-u\|^2_L.
\end{equation*}
Rearranging and using the coercivity of $a(\cdot,\cdot)$ on $K^\perp$,
\begin{equation*}
c_a\|u-u_\eta\|^2_H \leq a(u_\eta-u,u_\eta-u)+\frac{\eta}{2}\|u_\eta-u\|^2_L \leq \frac{\eta}{2}\|u\|^2_L.
\end{equation*}
Taking limits
\begin{equation*}
\lim_{\eta \downarrow 0} c_a\|u-u_\eta\|^2_H \leq \lim_{\eta \downarrow 0} \frac{\eta}{2}\|u\|^2_L = 0.
\end{equation*}
Hence $u_\eta \to u \in K^\perp$. Finally, we obtain
\begin{equation*}
    \lim_{\eta \downarrow 0} V_\eta(u_\eta) = \lim_{\eta \downarrow 0} \left(a(u_\eta, u_\eta)-\ell(u_\eta)+\frac{\eta}{2}\|u_\eta\|^2_L\right) = V(u),
\end{equation*}
because $u_\eta \to u \in K^\perp$.
Thus, $\lbrace u_\eta \rbrace_{\eta\geq 0}$ is a recovery sequence for $u$.
From \textit{ii}) and \textit{iii}), the $\Gamma$-convergence of $V_\eta$ as $\eta\downarrow 0$ is established. Finally, \textit{i}) and the $\Gamma$-convergence provide the convergence of the minimum problems and of their minimizers.
\end{proof}

\section{A perturbative $L$-regularization - discrete setting}

\begin{framed}
\begin{definition}[Admissible finite element spaces]
Let $H$ be a Hilbert space and let $\tilde{H}\subset H$ be a stronger regularity space. We say that a family of finite-dimensional subspaces $\lbrace H^h\rbrace^{h>0}$ such that $H^h \subset H$ is admissible if it satisfies:
\begin{itemize}
    \item{Approximability:} There exists constants $C>0$ and $k\geq 1$ such that for all $u \in \tilde{H}$,
    $
        \inf_{\bar{u}^h \in H^h} \|u-u^h\|_H \leq C h^k\|u\|_{\tilde{H}}. 
    $
    \item{Variational consistency:} The indicator functions $I_{H^h} \xrightarrow{\Gamma} I_H$ as $h\downarrow 0$ in the strong topology of $H$.
\end{itemize} 
\end{definition}
\end{framed}

\begin{remark}
   For conforming spaces, variational consistency is equivalent to strong Kuratowski convergence $H^h\xrightarrow{K} H$. Since $H^h \subset H$, the condition is satisfied if $\bigcup_{h>0} H^h$ is dense in $H$, ensuring that every $u\in H$ is the strong limit of a sequence $u^h \in H^h$.
   A standard example of admissible finite element spaces is given by conforming Lagrangian finite elements of polynomial degree $k$ on a family of shape‑regular, quasi‑uniform meshes.
\end{remark}

\begin{framed}
\begin{problem}[\Ptwoh~-- finite element approximation of \Ptwo]
Let $H^h$ be an admissible finite element space and let
\begin{equation*}
    V_\eta^h(u) := V_\eta(u)+I_{H^h}(u)  
\end{equation*}
be a finite element approximating regularized functional defined for $u \in H$. Find
\begin{equation*}
    u^h_\eta = \arg \left\{ \min_{\bar{u} \in H} V_\eta^h\left(\bar{u}\right)\right\} = \arg \left\{ \min_{\bar{u} \in H^h} V_\eta\left(\bar{u}\right)\right\}.
\end{equation*}
The minimizer $u^h_\eta\in H^h$ satisfies the discrete optimality conditions
\begin{equation*}
    a\left(u^h_\eta, v^h\right)+\eta \left(u^h_\eta, v^h\right)_L = \ell(v^h)~\forall~v^h \in H^h\cap K^\perp .
\end{equation*}
\end{problem}
\end{framed}

\begin{framed}
\begin{proposition}
Let $u$ be the solution to \Pone~and $u_\eta^h$ be the solution to \Ptwoh. Assume furthermore
$
\| u_\eta \|_{\tilde{H}} \leq C \| \ell \|_L
$
for some constant $C$. Then, the following error estimate holds
\begin{equation*}
\| u - u_\eta^h \|_H \leq C \left( \eta + h^k \right)\| \ell \|_L.
\end{equation*}
\end{proposition}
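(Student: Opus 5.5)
The plan is to split the error with the triangle inequality,
\begin{equation*}
\| u - u_\eta^h \|_H \leq \| u - u_\eta \|_H + \| u_\eta - u_\eta^h \|_H ,
\end{equation*}
and bound each term separately. The first term is the regularization error. By the first Proposition of Section 3, it is bounded by $C\eta\|\ell\|_{H'}$. To express this in terms of $\|\ell\|_L$, I would use the continuous embedding $H\subset L$. It gives $\|\ell\|_{H'} = \sup_v \ell(v)/\|v\|_H \leq C \sup_v \|\ell\|_L\|v\|_L/\|v\|_H \leq C\|\ell\|_L$, where $\ell$ is identified with an element of the pivot space $L$. This produces the $\eta$ contribution.

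For the discretization term I would run a Céa-type argument on $K^\perp$, uniformly in $\eta$. First I would check that the discrete minimizer lies in $K^\perp$, or at least that the error equation can be tested on $H^h\cap K^\perp$. Taking $v^h\in H^h\cap K$ in the full discrete optimality condition over $H^h$ gives $\eta(u^h_\eta,v^h)_L=0$. Hence $u^h_\eta$ is $L$-orthogonal to $H^h\cap K$. In the typical case $K\subset H^h$ (e.g.\ constants in the Neumann problem), this gives $u^h_\eta\in K^\perp$. Subtracting the discrete optimality condition from the continuous one then yields Galerkin orthogonality $a_\eta(u_\eta-u^h_\eta,v^h)=0$ for all $v^h\in H^h\cap K^\perp$.

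Next I would use the coercivity of $a_\eta$ on $K^\perp$ with constant $c_a$, shown in the equi-coercivity step of the previous Proposition. Its continuity constant is $C_a+\eta C^2$, bounded uniformly for $\eta\le 1$. Together these give
\begin{equation*}
\|u_\eta-u^h_\eta\|_H \leq \frac{C_a+\eta C^2}{c_a}\inf_{w^h\in H^h\cap K^\perp}\|u_\eta-w^h\|_H .
\end{equation*}
Approximability and the assumed regularity $\|u_\eta\|_{\tilde H}\le C\|\ell\|_L$ would then give $Ch^k\|\ell\|_L$.

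The main obstacle is that approximability is stated for the infimum over $H^h$, not over $H^h\cap K^\perp$. I would handle this by projecting: take a near-best $\bar u^h\in H^h$ and replace it by $\bar u^h-\Pi^\parallel\bar u^h$. This lies in $H^h\cap K^\perp$ provided $K\subset H^h$. Since $u_\eta\in K^\perp$, we have $\|u_\eta-\Pi^\perp\bar u^h\|_H=\|\Pi^\perp(u_\eta-\bar u^h)\|_H\le C\|u_\eta-\bar u^h\|_H$, using boundedness of $\Pi^\perp$ on $H$ (it is finite-rank complemented, since $K$ is finite-dimensional). If $K\not\subset H^h$, I would instead argue with a discrete kernel $K^h=K\cap H^h$ and a consistency perturbation. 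I expect this to be the delicate point, and would state the assumption $K\subset H^h$ explicitly. Combining the two bounds then gives $\|u-u^h_\eta\|_H\le C(\eta+h^k)\|\ell\|_L$.
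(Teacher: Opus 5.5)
Your proposal follows the paper's proof: the same triangle-inequality split, with $\|u_0-u_\eta\|_H\le C\eta\|\ell\|_{H'}$ taken from Section 3. Where the paper merely asserts that $\|u_\eta-u_\eta^h\|_H$ ``represents the finite element approximation error'', you actually prove the $Ch^k\|\ell\|_L$ bound via uniform coercivity of $a_\eta$ on $K^\perp$, Galerkin orthogonality, and projecting a quasi-best approximant into $H^h\cap K^\perp$, and you also supply the conversion $\|\ell\|_{H'}\le C\|\ell\|_L$.

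The hypothesis $K\subset H^h$ that you add is not in the paper's statement, but it (or some substitute for it) is genuinely needed for a bound uniform in independent $\eta$ and $h$. If $H^h\cap K=\{0\}$, then $H^h$ contains near-kernel directions whose $a$-energy is only $O(h^{2k})$. For $\eta\ll h^{2k}$ the discrete minimizer can then retain an $O(1)$ component in $K$. So keep $K\subset H^h$ as an explicit assumption rather than pursuing the ``discrete kernel plus consistency perturbation'' fallback.
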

\end{framed}

\begin{proof}
By the triangle inequality, the total error is bounded as
$
\|u_0-u_\eta^h\|_H \leq \| u_0 -u_\eta\|_H +\|u_\eta-u_\eta^h \|_H
$. 
The first term is bounded by $\|u_0-u_\eta\|\leq C\eta\|\ell\|_{H'}$. The second term represents the finite element approximation error.
\end{proof}

\begin{framed}
\begin{proposition}
    \Ptwoh~$\Gamma$-converges to \Pone~as $\eta \downarrow 0$ and $h \downarrow 0$, simultaneously. In particular $u^h_\eta \to u_0 \in H$, where $u^h_\eta$ is the unique solution of $\mathrm{P2}^h$ and $u_0$ is the unique solution of \Pone.
\end{proposition}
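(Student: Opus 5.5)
The plan is to follow the three-step pattern of the continuous $\Gamma$-convergence proposition, now for a doubly indexed family $V^h_\eta = V_\eta + I_{H^h}$ with $(\eta,h)\downarrow(0,0)$ along an arbitrary joint sequence. As in the continuous case, I work in $K^\perp$. Testing the discrete optimality condition of \Ptwoh{} with elements of $K\cap H^h$ shows that the $K$-component of $u^h_\eta$ vanishes whenever $H^h$ is compatible with the splitting $H=K\oplus K^\perp$, which I assume, e.g.\ $K\subset H^h$; constants are contained in Lagrangian spaces in the Neumann setting. Under this assumption the relevant functional is $V^h_\eta$ restricted to $K^\perp$, and the limit is $V_0 = V + I_{K^\perp}$, i.e.\ the functional of \PoneA.

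\emph{Equi-coercivity.} For $u\in K^\perp\cap H^h$ we have
\[
V^h_\eta(u)\geq \tfrac{c_a}{2}\|u\|_H^2-\|\ell\|_{H'}\|u\|_H .
\]
This bound is independent of $\eta$ and $h$, so sublevel sets are uniformly bounded. Hence the discrete minimizers $u^h_\eta$ satisfy $\|u^h_\eta\|_H\leq C\|\ell\|_{H'}$, which is also what one gets by testing with $v^h=u^h_\eta$.

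\emph{Liminf inequality.} Let $u_j\rightharpoonup u$ with $u_j\in K^\perp$. If infinitely many $u_j\notin H^{h_j}$, those terms equal $+\infty$ and contribute nothing. Otherwise $V^{h_j}_{\eta_j}(u_j)\geq V(u_j)$, and the weak lower semicontinuity of $V$ on $K^\perp$, already used in the previous proposition, gives $\liminf V^{h_j}_{\eta_j}(u_j)\geq V(u)$. Weak closedness of $K^\perp$ keeps the limit in $K^\perp$.

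\emph{Recovery sequence.} This is the step I expect to be the main obstacle, because both indices must be handled at once. For $u\in K^\perp$, let $u^h$ be the $a$-orthogonal (Ritz) projection of $u$ onto $H^h\cap K^\perp$. By C\'ea's lemma and the variational consistency (density) of the admissible spaces, $\|u-u^h\|_H\to0$ as $h\downarrow0$. Here I need density of $H^h\cap K^\perp$ in $K^\perp$. This follows from density of $H^h$ in $H$ together with $K\subset H^h$: then $\Pi^\perp$ maps $H^h$ into $H^h\cap K^\perp$, so $\|u-\Pi^\perp w^h\|_H\le\|u-w^h\|_H$ for any $w^h\in H^h$. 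Then
\[
V^h_\eta(u^h)=V(u^h)+\tfrac{\eta}{2}\|u^h\|_L^2 .
\]
The first term tends to $V(u)$ by continuity of $V$ in $H$. The second is bounded by $\tfrac{\eta}{2}C\|u^h\|_H^2\to0$, since $\|u^h\|_H$ is bounded uniformly in $h$. No coupling between $\eta$ and $h$ is needed.

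Finally, the fundamental theorem of $\Gamma$-convergence combines equi-coercivity with $\Gamma$-convergence. It yields convergence of the minimum values and weak convergence of the minimizers $u^h_\eta\rightharpoonup u_0$ along the whole sequence, by uniqueness of $u_0$ from the earlier proposition. To upgrade to strong convergence in $H$, I use the previous error estimate: apply the triangle inequality with $\|u_0-u_\eta\|_H\le C\eta\|\ell\|_{H'}$, and bound $\|u_\eta-u^h_\eta\|_H$ by C\'ea's lemma for $a_\eta$. The coercivity and continuity constants of $a_\eta$ are uniform in $\eta$, and $u_\eta\to u_0$ strongly, so $\inf_{v^h\in H^h\cap K^\perp}\|u_\eta-v^h\|_H\to0$ by density. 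Alternatively, one can combine convergence of the energies $V(u^h_\eta)\to V(u_0)$ with the coercive quadratic structure, since $\tfrac12 a(u^h_\eta-u_0,u^h_\eta-u_0)\le V(u^h_\eta)-V(u_0)$ up to a term that vanishes by weak convergence and the optimality of $u_0$.
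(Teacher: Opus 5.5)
Your proof is correct and follows the paper's skeleton (equi-coercivity on $K^\perp$, liminf by weak lower semicontinuity of $V$, recovery sequence, fundamental theorem), but it differs from the paper in four ways.

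\textbf{Parameters.} The paper restricts to the diagonal $\eta=h=\epsilon$. You allow arbitrary joint sequences $(\eta_j,h_j)\downarrow(0,0)$, which is what ``simultaneously'' literally asks for.

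\textbf{Recovery sequence.} The paper takes the solution of the regularized discrete problem $a_\epsilon(u_\epsilon,v^\epsilon)=a(u,v^\epsilon)$ on $H^\epsilon\cap K^\perp$, which keeps the argument parallel to the continuous case. You use the plain $a$-Ritz projection and treat $\tfrac{\eta}{2}\|u^h\|_L^2$ as a vanishing bounded perturbation. This is simpler and shows directly that no coupling between $\eta$ and $h$ is needed.

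\textbf{Hidden assumption.} The paper's proof presumes $u^h_\eta\in K^\perp$ and $\Pi^\epsilon u\in H^\epsilon\cap K^\perp$. Neither follows from its definition of admissible spaces, since testing the optimality condition with $K\cap H^h$ says nothing when that intersection is trivial. Your explicit hypothesis $K\subset H^h$, which holds for Lagrangian spaces in the Neumann case, therefore only makes visible an assumption the paper already relies on. Your derivation of the density of $H^h\cap K^\perp$ in $K^\perp$ supplies a step the paper omits.

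\textbf{Strong convergence.} Equi-coercivity here only bounds sublevel sets, so the fundamental theorem gives only $u^h_\eta\rightharpoonup u_0$. Your triangle-inequality argument with the $\eta$-uniform C\'ea bound proves the strong convergence $u^h_\eta\to u_0$ in $H$ that the statement claims, and it does so independently of the $\Gamma$-argument.

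One small correction to your alternative energy route: the cross term $a(u_0,u^h_\eta-u_0)-\ell(u^h_\eta-u_0)$ vanishes identically, because $u_0$ satisfies the optimality condition of $\mathrm{P1}$ on $K^\perp$ and $u^h_\eta-u_0\in K^\perp$. Hence $\tfrac12 a(u^h_\eta-u_0,u^h_\eta-u_0)=V(u^h_\eta)-V(u_0)$ holds exactly, with no appeal to weak convergence.
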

\end{framed}

\begin{proof}
To begin, set $\eta = h = \epsilon \downarrow 0$ so that the convergence analysis can be carried out with respect to a single small parameter. Also let $\Pi^h = \Pi^\epsilon$ be the projection onto $H^h = H^\epsilon$. With this notation, the discrete regularized functional becomes
\begin{equation*}
    V_\epsilon(u):=V^{h =\epsilon}_{\eta=\epsilon}(u)=V(u)+\frac{\epsilon}{2}\|u\|_L+I_{H^\epsilon}(u)
\end{equation*}
and the corresponding minimizer $u_\epsilon: = u^{h=\epsilon}_{\eta=\epsilon}$ satisfies
\begin{equation*}
    u_\epsilon = \arg\left\lbrace \inf_{\bar{u}\in H} V_\epsilon(\bar{u})\right\rbrace.
\end{equation*}

\noindent \textit{i}) Equi-coercivity: For the regularized bilinear form $a_\epsilon(\cdot, \cdot):= a(\cdot,\cdot)+\epsilon(\cdot,\cdot)_L$, uniform coercivity holds on $K^\perp$. Since $H^\epsilon \cap K^\perp \subset H \cap K^\perp = K^\perp$, the family $\lbrace V_\epsilon\rbrace_{\epsilon\geq 0}$ is equi-coercive on $K^\perp$.

\noindent \textit{ii}) The \textit{liminf} inequality:
Let $u \in K^\perp$ and let $u_\epsilon \in H^\epsilon\cap K^\perp$ be such that $u_\epsilon \rightharpoonup u \in K^\perp$. Then,
\begin{equation*}
    V_\epsilon(u_\epsilon) = V(u_\epsilon)+\frac{\epsilon}{2}\|u_\epsilon\|_L^2+I_{H^\epsilon}(u_\epsilon)\geq V(u_\epsilon)
\end{equation*}
because $I_{H^\epsilon}(v^\epsilon) = 0$ for all $v^\epsilon \in H^\epsilon$.
Given the properties of $a(\cdot,\cdot)$ and $\ell(\cdot)$ defined on $K^\perp$, $V(\cdot) = \frac12 a(\cdot,\cdot)-\ell(\cdot)$ is weakly lower semicontinuous on $K^\perp$, we obtain
\begin{equation*}
    \liminf_{\epsilon \downarrow 0} V_\epsilon(u_\epsilon) \geq \liminf_{\epsilon \downarrow 0} V(u_\epsilon) \geq V(u),
\end{equation*}
whenever $u_\epsilon \rightharpoonup u \in K^\perp$.

\noindent \textit{iii}) The existence of a \textit{recovery} sequence:
Let $u \in K^\perp$ and define $u_\epsilon \in H^\epsilon\cap K^\perp$ as the unique solution of
\begin{equation*}
    a_\epsilon(u_\epsilon,v^\epsilon) = a(u,v^\epsilon)~\forall~v^\epsilon\in H^\epsilon\cap K^\perp.
\end{equation*}
Recall that $a_\epsilon(\cdot,\cdot)$ is coercive on $K^\perp$ with a coercivity constant $c_a$ and continuous on $H$ with a continuity constant $C_a$ and both constants are independent of $\epsilon$. Now, by taking
$v^\epsilon = u_\epsilon \in H^\epsilon\cap K^\perp$, we can infer the uniform bound $\| u_\epsilon \|_H \leq C_a/c_a \|u\|_H$. Since bounded sequences in $H$ admit weakly convergent subsequences, and since the limit is unique, the whole sequence satisfies  $u_\epsilon \rightharpoonup u \in K^\perp$.
To prove that $\lbrace u_\epsilon\rbrace_{\epsilon \geq 0}$ is a recovery sequence, choose $v^\epsilon=u_\epsilon-\Pi^\epsilon u \in H^\epsilon\cap K^\perp$ and obtain
\begin{equation*}
a(u_\epsilon-u, u_\epsilon -\Pi^\epsilon u) = -\epsilon(u_\epsilon, u_\epsilon-\Pi^\epsilon u)_L.
\end{equation*}
Expanding, invoking the coercivity of $a_\epsilon(\cdot,\cdot)$ on $K^\perp$ and the continuity of $a(\cdot,\cdot)$ on $H$, and applying the triangle and Young's inequalities,
\begin{equation*}
c_a\|u-u_\epsilon\|^2_H \leq C\left(\epsilon\|u\|^2_L+\|u-\Pi^\epsilon u\|^2_H\right),
\end{equation*}
where $C > 0$ is a constant independent of $\epsilon$. Taking limits
\begin{equation*}
\lim_{\epsilon \downarrow 0} c_a\|u-u_\epsilon\|^2_H \leq \lim_{\epsilon \downarrow 0} C\left( \epsilon\|u\|^2_L+\|u-\Pi^\epsilon u\|^2_H\right) = 0.
\end{equation*}
Hence $u_\epsilon \to u \in K^\perp$. Finally, we obtain
\begin{equation*}
    \lim_{\epsilon \downarrow 0} V_\epsilon(u_\epsilon) = \lim_{\epsilon \downarrow 0} \left(a(u_\epsilon, u_\epsilon)-\ell(u_\epsilon)+\frac{\epsilon}{2}\|u_\epsilon\|^2_L\right) = V(u),
\end{equation*}
because $u_\epsilon \to u \in K^\perp$.
Thus, $\lbrace u_\epsilon \rbrace_{\epsilon\geq 0}$ is a recovery sequence for $u$.
From \textit{ii}) and \textit{iii}), the $\Gamma$-convergence of $V_\epsilon$ as $\epsilon\downarrow 0$ is established. Finally, \textit{i}) and the $\Gamma$-convergence provide the convergence of the minimum problems and of their minimizers.
\end{proof}

\section{Acknowledgments}

CGG gratefully acknowledges the financial support from the European Research Council through the ERC Consolidator Grant ``DATA-DRIVEN OFFSHORE'' (Project ID 101083157). IR acknowledges the funding received from the project PID2021-128812OB-I00 from the Spanish Ministry of Science and Innovation.

\bibliographystyle{plain}
\bibliography{references} 

\end{document}